\definecolor{verylight}{gray}{0.97}
\definecolor{light}{gray}{0.9}
\definecolor{medium}{gray}{0.85}
\definecolor{dark}{gray}{0.6}
\def\frk{\frak}               
\def\Phi{{\frk n}}
\def\Phi{{\frk N}}
\def\Cc{{\mathcal C}}
\def\opn#1#2{\def#1{\operatorname{#2}}} 
\opn\chara{char} \opn\length{\ell} \opn\pd{pd} \opn\rk{rk}
\opn\projdim{proj\,dim} \opn\injdim{inj\,dim} \opn\rank{rank}
\opn\depth{depth} \opn\grade{grade} \opn\height{height}
\opn\embdim{emb\,dim} \opn\codim{codim}
\opn\Tr{Tr} \opn\bigrank{big\,rank}
\opn\superheight{superheight}\opn\lcm{lcm}
\opn\trdeg{tr\,deg}
\opn\reg{reg} \opn\lreg{lreg} \opn\ini{in} \opn\lpd{lpd}
\opn\size{size}\opn\bigsize{bigsize}
\opn\cosize{cosize}\opn\bigcosize{bigcosize}
\opn\sdepth{sdepth}\opn\sreg{sreg}
\opn\link{link}\opn\fdepth{fdepth}
\opn\div{div} \opn\Div{Div} \opn\cl{cl} \opn\Cl{Cl}
\let\epsilon\varepsilon
\let\phi=\varphi
\let\kappa=\varkappa
\opn\Spec{Spec} \opn\Supp{Supp} \opn\supp{supp} \opn\Sing{Sing}
\opn\Ass{Ass} \opn\Min{Min}\opn\Mon{Mon} \opn\dstab{dstab} \opn\astab{astab}
\opn\Syz{Syz}
\opn\Ann{Ann} \opn\Rad{Rad} \opn\Soc{Soc}
\opn\Im{Im} \opn\Ker{Ker} \opn\Coker{Coker} \opn\Am{Am}
\opn\Hom{Hom} \opn\Tor{Tor} \opn\Ext{Ext} \opn\End{End}
\opn\Aut{Aut} \opn\id{id}
\opn\nat{nat}
\opn\pff{pf}
\opn\Pf{Pf} \opn\GL{GL} \opn\SL{SL} \opn\mod{mod} \opn\ord{ord}
\opn\Gin{Gin} \opn\Hilb{Hilb}\opn\sort{sort}
\opn\initial{init}
\opn\ende{end}
\opn\height{height}
\opn\type{type}
\opn\aff{aff} \opn\con{conv} \opn\relint{relint} \opn\st{st}
\opn\lk{lk} \opn\cn{cn} \opn\core{core} \opn\vol{vol}
\opn\link{link} \opn\star{star}\opn\lex{lex}
\opn\gr{gr}
\def\pot#1#2{#1[\kern-0.28ex[#2]\kern-0.28ex]}
\opn\dirlim{\underrightarrow{\lim}}
\opn\inivlim{\underleftarrow{\lim}}
\let\tensor=\otimes
\let\to=\rightarrow
\def\Implies{\ifmmode\Longrightarrow \else
        \unskip${}\Longrightarrow{}$\ignorespaces\fi}
\def\implies{\ifmmode\Rightarrow \else
        \unskip${}\Rightarrow{}$\ignorespaces\fi}
\def\iff{\ifmmode\Longleftrightarrow \else
        \unskip${}\Longleftrightarrow{}$\ignorespaces\fi}
 \theoremstyle{plain}
\newtheorem{Theorem}{Theorem}[section]
 \newtheorem{Lemma}[Theorem]{Lemma}
 \theoremstyle{definition}
 \newtheorem{Remark}[Theorem]{Remark}
\let\epsilon\varepsilon
\let\kappa=\varkappa
\opn\dis{dis}
\def\pnt{{\raise0.5mm\hbox{\large\bf.}}}
\opn\Lex{Lex}
\begin{document}
\title{On the relations of isotonian algebras}
\author {J\"urgen Herzog,  Ayesha Asloob Qureshi and  Akihiro Shikama}

\address{J\"urgen Herzog, Fachbereich Mathematik, Universit\"at Duisburg-Essen, Fakult\"at f\"ur Mathematik, 45117 Essen, Germany} \email{juergen.herzog@uni-essen.de}

\address{Ayesha Asloob Qureshi, Sabanc\i \; University, Orta Mahalle, Tuzla 34956, Istanbul, Turkey}
\email{aqureshi@sabanciuniv.edu}

\address{Akihiro Shikama, Department of Pure and Applied Mathematics, Graduate School of Information Science and Technology,
Osaka University, Toyonaka, Osaka 560-0043, Japan}
\email{a-shikama@cr.math.sci.osaka-u.ac.jp}

\thanks{This paper was partially written during the stay of the second author at The Abdus Salam International Centre of Theoretical Physics (ICTP), Trieste, Italy}

\begin{abstract}
It is shown that for large classes of posets $P$ and $Q$, the defining ideal $J_{P,Q}$ of an isotonian algebras is generated by squarefree binomials. Within these classes, those posets are classified for which $J_{P,Q}$ is quadratically generated.
\end{abstract}
\thanks{}
\subjclass[2010]{05E45, 05E40, 13C99}
\keywords{}

\maketitle
\section*{Introduction}

In \cite{BHHSQ}, isotonian algebras were introduced as a generalization of Hibi rings. Given two  finite posets $P$ and $Q$, the algebra $K[P,Q]$ is generated by monomials
\[
u_{\varphi}=\prod_{p\in P}x_{p,\varphi(p)}\quad \text{with}\quad p\in P,
\]
where $\phi: P \rightarrow Q$ is an isotone (order preserving) map. We denote the defining ideal of $K[P,Q]$ by $J_{P,Q}$. Since $K[P,Q]$ is a toric ring, the ideal $J_{P,Q}$ is generated by binomials.

Isotone maps first appeared in Commutative Algebra in the study of the ideals $I(P,Q)$ which are generated by all the monomials $u_{\phi}$, see \cite{FGH}. The structure of these ideals has been further studied in subsequent papers \cite{KKM}, \cite{HSQ}, \cite{FN},\cite{DFN1}, \cite{DFN2}.

In the case that $Q$ is a chain of length $2$, the algebra $K[P,Q]$ coincides with the so-called and well studied Hibi ring, see \cite{Hi}, \cite{HHBook}, \cite{HEM}, \cite{EHM}. It is known that the defining ideals of  Hibi rings admit  quadratic Gr\"obner basis and they are normal Cohen-Macaulay domains. In the above mentioned paper \cite{BHHSQ}, it is also shown that $J_{P,Q}$ admits a quadratic Gr\"obner basis if $P$ is a chain and $Q$ is a co-rooted poset.  However, easy examples show that isotonian algebras need not to admit quadratic Gr\"obner bases.  On the other hand, it is conjectured that the ideal $J_{P,Q}$ has a squarefree initial ideal for a suitable term order.

We call $f=u-v$ squarefree, if $u$ and $v$ are squarefree monomials.  In support of the above conjecture we prove in Theorem~\ref{sqfr}, that $J_{P,Q}$ is generated by squarefree binomials in the case that each connected component of $P$ is rooted or co-rooted, and that $J_{P,Q}$ is generated by quadratic binomials if and only if every poset cycle  of length $\geq 6$ in $Q$ admits a chord.

\section{Reduction to the case that  $P$ is connected}

Let $P$ be a finite poset. We say that $P$ is connected if its Hasse diagram is connected as a graph. Given two posets $P_1$ and $P_2$, the sum $P_1+P_2$ is defined to be the disjoint union of the elements  of $P_1$ and $P_2$ with $p\leq q$ if and ony if $p,q\in P_1$ or $p,q\in P_2$ and $p\leq q$ in the corresponding posets $P_1$ or $P_2$. Then its is clear that $P$ can be written as  $P=P_1+P_2+\cdots +P_r$ where each $P_i$ is connected. The $P_i$ are called the connected components of $P$.

Let $P,P_1,P_2,\ldots,P_r$ and $Q,Q_1,Q_2,\ldots,Q_s$ be finite posets. Then it is shown in \cite[Lemma 1.3]{BHHSQ} that  $K[\sum_{i=1}^rP_i,Q]$ is isomorphic to the Segre product $K[P_1,Q]*K[P_2,Q]*\cdots *K[P_r,Q]$. Recall that if  $R_1=K[f_1,\ldots,f_r]$ and $R_2=K[g_1,\ldots,g_s]$ are two standard graded $K$-algebras. Then

\[
R_1*R_2=K[\{f_ig_j\:\; i=1,\ldots,r,j=1,\ldots,s\}]
\]
is the Segre product of $R_1$ and $R_2$.

\medskip
We denote by $\Hom(P,Q)$ the set of isotone maps from $P \rightarrow Q$, and let $T=K[t_{\varphi}\: \; \varphi \in \Hom(P,Q)]$ be the polynomial ring in the  variables $t_{\varphi}$, and  let $\phi : T \rightarrow K[P,Q]$ be the $K$-algebra homomorphism defined by  $t_{\varphi} \mapsto \prod_{p\in P} x_{p\varphi(p)}$. We denote the kernel of  $\varphi$ by $I_{P,Q}$.

\medskip

\begin{Theorem}
\label{connected}
Let $P$ and $Q$ be two finite posets and let $P_1,P_2,\ldots, P_r$ be the connected components of $P$. Then $I_{P,Q}$ is generated by squarefree, respectively quadratic binomials,  if each $I_{P_i,Q}$ is generated by squarefree, respectively quadratic binomials.
\end{Theorem}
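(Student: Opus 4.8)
The plan is to reduce the assertion to a statement about the Segre product of two algebras, and then to describe the defining ideal of that Segre product explicitly. By \cite[Lemma~1.3]{BHHSQ} there is an isomorphism $K[P,Q]\iso K[P_1,Q]*K[P_2,Q]*\cdots*K[P_r,Q]$. Since the Segre product is associative and a Segre product of standard graded $K$-algebras is again standard graded, an induction on $r$ reduces the theorem to the following claim: if $R_1=S_1/I_1$ and $R_2=S_2/I_2$ are standard graded $K$-algebras, where $S_1=\poly{K}{u}{m}$ and $S_2=\poly{K}{v}{n}$ are polynomial rings and $I_1,I_2$ are ideals generated by squarefree (respectively quadratic) binomials, then the defining ideal $L$ of $R_1*R_2$ is again generated by squarefree (respectively quadratic) binomials. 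Here $L$ denotes the kernel of the $K$-algebra map $S=K[w_{ij}\: \; 1\le i\le m,\ 1\le j\le n]\to R_1*R_2$ that sends $w_{ij}$ to the product of the images of $u_i$ and $v_j$.

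To describe $L$, let $L_0\subseteq L$ be the ideal generated by the Segre relations $w_{ij}w_{kl}-w_{il}w_{kj}$. It is classical that $S/L_0$ is the Segre product $S_1*S_2$ of the two polynomial rings, under the isomorphism sending $w_{ij}$ to $u_iv_j$; hence $L/L_0$ is the kernel of the induced surjection $S_1*S_2\to R_1*R_2$. Viewing $S_1*S_2$ as a subalgebra of $S_1\tensor_K S_2$ and using that $(S_1\tensor_K S_2)/(I_1\tensor S_2+S_1\tensor I_2)\iso R_1\tensor_K R_2$, inside which $R_1*R_2$ is the corresponding Segre subalgebra, one obtains
\[
L/L_0=(I_1\tensor S_2+S_1\tensor I_2)\cap(S_1*S_2).
\]
The ring $S_1\tensor_K S_2$ carries the natural $\ZZ^2$-grading in which $S_1*S_2$ is the sum of the diagonal components, so the right-hand side equals $\Dirsum_d\bigl((I_1)_d\tensor(S_2)_d+(S_1)_d\tensor(I_2)_d\bigr)$. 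A direct degree-by-degree computation then shows that, as an ideal of $S_1*S_2$, this is generated by the elements $h\,v_{j_1}\cdots v_{j_p}$, where $h$ runs over a binomial generating set of $I_1$, $p=\deg h$, and $(j_1,\dots,j_p)$ is an arbitrary tuple of column indices, together with the symmetric elements $u_{i_1}\cdots u_{i_p}\,h'$ arising from $I_2$. Pulling these generators back along $S\to S_1*S_2$, we conclude that $L$ is generated by $L_0$ together with the binomials
\[
w_{i_1j_1}\cdots w_{i_pj_p}-w_{k_1j_1}\cdots w_{k_pj_p}
\]
obtained from each binomial generator $u_{i_1}\cdots u_{i_p}-u_{k_1}\cdots u_{k_p}$ of $I_1$ by inserting an arbitrary column tuple $(j_1,\dots,j_p)$, and the symmetric binomials coming from $I_2$.

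It remains to inspect this generating set. The Segre relations are quadratic squarefree binomials. If a binomial generator $u_{i_1}\cdots u_{i_p}-u_{k_1}\cdots u_{k_p}$ of $I_1$ is quadratic (i.e.\ $p=2$), then each of its lifts is again a quadratic binomial; and if it is squarefree, then $i_1,\dots,i_p$ are pairwise distinct, so $w_{i_1j_1},\dots,w_{i_pj_p}$ are pairwise distinct variables and the lift is squarefree, and likewise for the monomial in the $k$'s and for the generators coming from $I_2$. This settles the case $r=2$; together with the trivial case $r=1$, the induction then yields the theorem.

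The step I expect to require the most care is the identification of $L/L_0$: one must check that adjoining the Segre relations to the liftings of the binomial generators of the two factors already accounts for every relation in $L$, that is, that no genuinely new relation ``mixing'' the two posets occurs. This is exactly what the $\ZZ^2$-grading argument on $S_1\tensor_K S_2$ provides. The remaining ingredients---associativity of the reduction, that a Segre product of standard graded algebras is standard graded, and that the squarefree/quadratic property is preserved under lifting---are routine.
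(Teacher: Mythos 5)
Your proof is correct, and it reaches the Segre-product reduction exactly as the paper does (via \cite[Lemma 1.3]{BHHSQ} and induction on $r$), but your way of establishing the two-factor claim is genuinely different. The paper argues element-wise: it takes an arbitrary binomial $h=\prod_t z_{i_tj_t}-\prod_t z_{k_tl_t}$ of the defining ideal, splits it as $h=h_1+h_2$ with $h_1$ changing only the first indices and $h_2$ only the second, checks that $h_1,h_2$ lie in the ideal by projecting onto the two factors, and then lifts a presentation $w=\sum_q u_qw_q$ of the corresponding relation of $R_1$ by distributing the column variables over its terms. You instead compute the defining ideal $L$ in closed form, $L=L_0+(\text{lifts of the generators of } I_1 \text{ and } I_2)$, by identifying $L/L_0$ with $(I_1\otimes S_2+S_1\otimes I_2)\cap(S_1*S_2)$ and evaluating that intersection through the $\ZZ^2$-grading; I checked the degree-by-degree generation argument and it goes through. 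Your route buys two things: an explicit generating set that includes the Segre relations $w_{ij}w_{kl}-w_{il}w_{kj}$ (themselves quadratic squarefree binomials of $L$), and a clean justification of precisely the step where the paper is terse --- the paper's identity $wg=\sum_q(u_qg_{1,q})(w_qg_{2,q})$ lives in $T_1\otimes T_2$, so transporting it back to an identity among the $z_{ij}$ is only valid modulo the Segre relations, a point your $L_0$ makes explicit. The paper's version, in turn, never needs to discuss the structure of the Segre product of polynomial rings and stays entirely within binomial manipulations. Both arguments are valid; yours is the more structural of the two.
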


\begin{proof}

By using induction on $r$ it suffices to prove the following fact: Let $R_1=K[f_1, \ldots, f_r]$ and $R_2=K[g_1, \ldots, g_s]$ be standard graded toric rings whose defining ideals are generated by squarefree binomials, then this is also the case for $R_1* R_2$.
To show this we introduce some notation: let $T_1=K[x_1, \ldots, x_r]$ and $T_2=K[y_1, \ldots, y_s]$ be the polynomial ring over $K$, and $I_1$ the kernel of $K$-algebra homomorphism $T_1 \rightarrow R_1$ with $x_i \mapsto f_i$. The ideal $I_2$ is similarly defined. Next, let $T=K[z_{ij}\; | \; i= 1, \ldots, r, j=1, \ldots, s]$ be the polynomial ring over $K$ and $I$ the kernel of the $K$-algebra homomorphism $T \rightarrow R_1 * R_2$ with $z_{ij} \mapsto f_i g_j$.

We have to show that any binomials  $h=\prod_{t=1}^d z_{i_t j_t} - \prod_{t=1}^d z_{k_t l_t} \in I$ is a linear combination of squarefree, respectively quadratic binomials  of $I$. Note that $h= h_1 + h_2$, where
\[h_1=\prod_{t=1}^d z_{i_t j_t} - \prod_{t=1}^d z_{k_t j_t} ,  \quad h_2=\prod_{t=1}^d z_{k_t j_t} - \prod_{t=1}^d z_{k_t l_t}.
\]

Observe that $h_1, h_2 \in I$. Indeed, $h_1 \mapsto \prod_{t=1}^d( f_{i_t} g_{j_t}) - \prod_{t=1}^d (f_{k_t} g_{j_t} ) $, and we have to show that $\prod_{t=1}^d( f_{i_t} g_{j_t}) - \prod_{t=1}^d (f_{k_t} g_{j_t} ) =0$.

Since $h \in I$, it follows that $\prod_{t=1}^d( f_{i_t} g_{j_t}) - \prod_{t=1}^d (f_{k_t} g_{l_t} )=0$. Let $R_2 \rightarrow K$ be the $K$-algebra homomorphism such that $g_i \mapsto 1$. This $K$-algebra homomorphism induces a canonical $K$-algebra homomorphism $R_1 * R_2 \rightarrow R_1 *k=R_1$. This shows $\prod_{t=1}^d( f_{i_t} g_{j_t}) - \prod_{t=1}^d (f_{k_t} g_{l_t} )= 0$ is mapped to $ \prod_{t=1}^d f_{i_t} - \prod_{t=1}^d f_{k_t} =0 $ in $R_1$. Similarly we see that $ \prod_{k=1}^d g_{i_s} - \prod_{k=1}^d g_{j_k} =0 $ in $R_2$.

Next, let $R_1 * R_2 \rightarrow R_1 \tensor R_2= K[f_1, \ldots f_r, g_1, \ldots, g_s]$ be the canonical inclusion map. Then, $\prod_{t=1}^d( f_{i_t} g_{j_t}) - \prod_{t=1}^d (f_{k_t} g_{j_t} ) \mapsto (\prod_{t=1}^d f_{i_t} - \prod_{t=1}^d f_{k_t} )\prod_{t=1}^d g_{j_t} =0$. Therefore,
\[
\prod_{t=1}^d( f_{i_t} g_{j_t}) - \prod_{t=1}^d (f_{k_t} g_{j_t} )=0.
\]
Similarly,
\[
\prod_{t=1}^d( f_{k_t} g_{j_t}) - \prod_{k=1}^d (f_{k_t} g_{l_t} )=0.
\]
This shows that $h_1, h_2 \in I$.

Since $w=\prod_{i=1}^d x_{i_t} - \prod_{i=1}^d x_{k_t} \in I_1$, our assumption implies that $w$  can be written as linear combination of squarefree, respectively quadratic binomials $w_q$.  Then  $w=\sum_qu_qw_q$ where $u_q$ is a monomial in  $T_1$,  and $w_q \in I_1$ is squarefree, respectively quadratic.  
Let $g=\prod_{t=1}^d y_{j_t}$, and for each $q$ write $g=g_{1,q}g_{2,q}$,  where $g_{2,q}$  is the product of $\deg w_q$ many factors of $g$ and $g_{1,q}$ is the product of the remaining factors. Then $w=\sum_qu_qw_q$  yields the equation
\begin{eqnarray}
\label{eq}
wg=\sum_q(u_qg_{1,q})(w_qg_{2,q}),
\end{eqnarray}
which is valid in $R_1*R_2$.

Let us first assume that all $w_q$ are squarefree binomials. Then for each $q$,  we have    $w_q=\prod_{\in V_q}x_{i}-\prod_{i\in V_q'}x_{i}$ with $V_q,V_q'\subset \{1,\ldots,r\}$  and $q_{2,q}=\prod_{j\in U_q}y_j$ with $U_q\subset \{j_1,\ldots,j_d\}$ and $|U_q|=|V_q|=|V_q'|$. For each $q$, let $\sigma_q: V_q\to U_q$ and  $\tau_q: V_q\to U_q$ be bijections.
Then  (\ref{eq}) implies that  $h_1$ is a linear combination of  squarefree binomials of the form
\[
\prod_{i \in V_q} z_{i\sigma_q(i)} -  \prod_{i \in V'_q} z_{i\tau_q(i)}.
\]

The similar statement holds for $h_2$. This shows that $h$ is a linear combination of squarefree binomials. Similarly, if we assume all $w_q$ are quadratic binomials, we deduce in the same way that $h$ is a linear combination of quadratic binomials.
\end{proof}

\section{Special binomials}

In this section we study the relations of $K[P,Q]$ in the case that $P$ is rooted. Analogous arguments hold when $P$ is  co-rooted. 

To define special binomial relations, we first prove the following lemma in which a new isotone map is constructed from a given one by suitable modifications.



\begin{Lemma}
\label{map}
Let $P$  and $Q$ be two posets and assume that $P$ is rooted. Let $\phi \in \Hom(P,Q)$ and let  $\psi: P \rightarrow Q$ be a map  satisfying the following condition: there exists  $p \in P$ such that
 \begin{enumerate}
 \item[({\em i})] $\psi(p_1) \geq \psi(p_2)$, for all $p_1 \geq p_2\geq p$,
 \item[{\em (ii)}] $\psi(q) = \phi (q)$ for all $q \ngeq p$,
 \item[{\em (iii)}] if there exists a lower neighbor $q$ of $p$ then $\psi(p) \geq \psi(q)$.
 \end{enumerate}
 Then $\psi \in \Hom (P,Q)$.
\end{Lemma}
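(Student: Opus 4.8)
The plan is to prove the statement by verifying directly that $\psi$ is order preserving, i.e.\ that $\psi(a)\le\psi(b)$ for every pair $a\le b$ in $P$; since $\Hom(P,Q)$ is by definition the set of isotone maps, this is all that is required. Write $F_p=\{r\in P:r\ge p\}$ for the principal order filter of $p$. Hypothesis (ii) says that $\psi$ agrees with the isotone map $\phi$ on $P\setminus F_p$, and hypothesis (i) says that $\psi$ is order preserving on $F_p$; so the only content is to control the pairs $a\le b$ that straddle the boundary of $F_p$.

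I would then split into cases according to which of $a,b$ lie in $F_p$. If $a,b\in F_p$, then $b\ge a\ge p$ and (i) gives $\psi(a)\le\psi(b)$. If $a,b\notin F_p$, then $\psi(a)=\phi(a)\le\phi(b)=\psi(b)$ by (ii) and the isotonicity of $\phi$. The case $a\in F_p$, $b\notin F_p$ cannot occur, since $a\ge p$ and $a\le b$ force $b\ge p$. This leaves the single essential case $a\notin F_p$ and $b\in F_p$, and it is here that the hypothesis that $P$ is rooted enters: recall that $P$ being rooted means that the principal order ideal $\{r\in P:r\le c\}$ is a chain for every $c\in P$.

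So suppose $a\le b$ with $a\ngeq p$ and $b\ge p$. Both $a$ and $p$ lie in the chain $\{r:r\le b\}$, hence are comparable; since $a\ngeq p$, this forces $a<p$. In particular $p$ is not minimal, so it has a lower neighbour $q$, and $q$ lies, together with $a$, in the chain $\{r:r\le p\}$; as $a<p$ and nothing lies strictly between $q$ and $p$, we get $a\le q$. Now (iii) gives $\psi(p)\ge\psi(q)$; since $q<p$ we have $q\notin F_p$, so (ii) gives $\psi(q)=\phi(q)$; and $a\le q$ together with isotonicity of $\phi$ gives $\phi(q)\ge\phi(a)=\psi(a)$. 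Finally $b\ge p$ and (i) give $\psi(b)\ge\psi(p)$. Combining, $\psi(a)=\phi(a)\le\phi(q)=\psi(q)\le\psi(p)\le\psi(b)$, as needed.

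The only genuine step is this last case, and that is exactly where the rooted hypothesis is used twice: first to force $a<p$ by comparability inside the chain $\{r:r\le b\}$, and then to produce a lower neighbour $q$ of $p$ sitting above $a$. Everything else is routine bookkeeping, and I do not anticipate a serious obstacle — only the mild point of checking that, in the essential case, $p$ really does have a lower neighbour, which is automatic once one knows $a<p$.
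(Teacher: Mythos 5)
Your proof is correct and follows essentially the same route as the paper: the same case split on whether each element lies above $p$, the same use of rootedness to force $p_2<p$ in the one nontrivial case, and the same chain of inequalities through the lower neighbour $q$. The only difference is that you explicitly justify $p_2\le q$ via the chain condition, a step the paper leaves implicit.
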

\begin{proof}
Let $p_1,p_2 \in P$ such that $p_1 \geq p_2$. We need to show that $\psi(p_1) \geq \psi(p_2)$. If $p_1 \geq p_2 \geq p$ or  $p_1, p_2 \ngeq p$, then by using (i) and (ii), we are done.  The only non trivial case is when $p_1 \geq p$ and $p_2 \ngeq p$.  Since $p_1 \geq p, p_2$ and $P$ is rooted it follows that $p$ and $p_2$ are comparable. Since $p_2 \ngeq p$, we must have $p_2 < p$. In particular, $p$ admits a lower neighbor $q$. Then by using (i), (ii) and (iii) we get
\[
\psi(p_1) \geq \psi(p) \geq \psi(q) = \phi(q) \geq \phi(p_2) = \psi(p_2).
\]
\end{proof}

Now we are ready to define special relations. Let $\phi_1, \ldots, \phi_d \in \Hom(P,Q)$, $p \in P$ and $\pi$ be a permutation on the set $\{1, \ldots, d\}$. In the case that $p$ admits a lower neighbor $p' \in P$ then we require that $\pi$ satisfies $\phi_{\pi(i)}(p) \geq \phi_i(p') $ for all $i$.

Then,  by Lemma~\ref{map},  the following maps
  \[
\phi'_i(q)= \left\{ \begin{array}{ll}
         \phi_{\pi(i)} (q), &  \text{if  $q \geq p$}, \\
         \phi_i (q), &\text{otherwise}.
                  \end{array} \right.
\]
are isotone. Consider the binomial $f = \prod_{k=1}^d  t_{\phi_i} - \prod_{k=1}^d   t_{\phi'_i}$. Then $f \in J_{P,Q}$, and $f \neq 0$ if $\pi$ is not identity. More generally, if $g= \prod_{k=1}^d  t_{\phi_i} - \prod_{k=1}^d   t_{\psi_i}$, Then $g \in J_{P,Q}$ if and only if  for all $p \in P$

\begin{equation}
\label{mult}
\{\varphi_1(p), \varphi_2(p), \ldots, \varphi_d(p) \} = \{\psi_1(p), \psi_2(p), \ldots, \psi_d(p) \}
\end{equation}
as multi-set.

The binomial $f = \prod_{k=1}^d  t_{\phi_i} - \prod_{k=1}^d   t_{\phi'_i}$ is  called {\em special} of type $(p, \pi)$ with respect to $\phi_1, \ldots, \phi_d \in \Hom(P,Q)$.
Note that $f$ is also special of type $(p, \pi^{-1})$ with respect to $\psi_1, \ldots, \psi_d$.

Now we are ready to prove the following

\begin{Theorem}
\label{special}
Let $P$ be a rooted or co-rooted poset. Then for any $Q$, the ideal $J_{P,Q}$ is generated by special binomials.
\end{Theorem}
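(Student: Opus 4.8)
The plan is to reduce the statement to a combinatorial connectedness property of special moves and then establish it by induction on $|P|$. We treat the rooted case; the co-rooted case is entirely analogous.

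Since $K[P,Q]$ is toric, $J_{P,Q}$ is generated by the binomials $u-v$ it contains with $u,v$ monomials, and each special binomial lies in $J_{P,Q}$; so the theorem follows once we show that every such $u-v$ lies in the ideal generated by special binomials. Writing $u=\prod_{k=1}^d t_{\varphi_k}$ and $v=\prod_{k=1}^d t_{\psi_k}$, the condition $u-v\in J_{P,Q}$ is precisely (\ref{mult}) for these two tuples. Hence it is enough to prove: for any two $d$-tuples $(\varphi_1,\dots,\varphi_d)$ and $(\psi_1,\dots,\psi_d)$ in $\Hom(P,Q)$ satisfying (\ref{mult}), the monomials $\prod_{k=1}^d t_{\varphi_k}$ and $\prod_{k=1}^d t_{\psi_k}$ are joined by a finite sequence of $d$-tuples of isotone maps in which each tuple is obtained from its predecessor by a single special move (of some type $(p,\pi)$); for then $u-v$ telescopes into a sum of $\pm$ special binomials.

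I would prove this by induction on $|P|$. If $|P|\le 1$, (\ref{mult}) forces the two tuples to be reorderings of one another, so the monomials already coincide. Let $|P|\ge 2$, pick a maximal element $p^*\in P$, and set $P_0=P\setminus\{p^*\}$. Then $P_0$ is again rooted; and since $p^*$ is maximal, deleting it creates no new cover relations, so every $p\in P_0$ has the same lower neighbour in $P_0$ as in $P$ --- at most one, by rootedness. Restricting to $P_0$, the tuples $(\varphi_k|_{P_0})$ and $(\psi_k|_{P_0})$ satisfy (\ref{mult}) on $P_0$, so by induction they are joined by a sequence of special moves for $P_0$. I then lift this sequence to $P$ step by step, maintaining the invariant that the current $P$-tuple restricts on $P_0$ to the current $P_0$-tuple: a $P_0$-move of type $(p,\pi)$ is made to act on the current $P$-tuple by the very same $(p,\pi)$. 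Its admissibility condition on $\pi$ (namely $\varphi_{\pi(i)}(p)\ge\varphi_i(p')$ whenever $p$ has a lower neighbour $p'$) coincides with the corresponding condition for $P_0$, since $p'$ is the same and the values $\varphi_{\pi(i)}(p),\varphi_i(p')$ are determined by the restriction to $P_0$; and the lifted maps are isotone on $P$ by Lemma~\ref{map}, whose hypotheses (i), (ii) are immediate from the construction while (iii) is precisely this admissibility condition. After running the whole lifted sequence, $(\varphi_k)$ has been transformed into a tuple $(\widetilde\varphi_k)$ with $\widetilde\varphi_k|_{P_0}=\psi_k|_{P_0}$ for all $k$; and since special moves preserve the multiset of values at every point, $(\widetilde\varphi_k)$ still satisfies (\ref{mult}) against $(\psi_k)$, so in particular $\{\widetilde\varphi_k(p^*)\}_k=\{\psi_k(p^*)\}_k$ as multisets.

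It remains to reconcile the values at $p^*$. As $p^*$ is maximal, its filter is $\{p^*\}$, so a special move of type $(p^*,\pi)$ changes the tuple only at $p^*$. Choosing a permutation $\pi$ with $\widetilde\varphi_{\pi(k)}(p^*)=\psi_k(p^*)$ for all $k$, this single move carries $(\widetilde\varphi_k)$ to $(\psi_k)$; it is admissible because, whenever $p^*$ has a lower neighbour $p'$, we have $\widetilde\varphi_{\pi(k)}(p^*)=\psi_k(p^*)\ge\psi_k(p')=\widetilde\varphi_k(p')$ by isotonicity of $\psi_k$ (again using $\widetilde\varphi_k|_{P_0}=\psi_k|_{P_0}$). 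This completes the induction, and hence the proof. I expect the lifting step to be the only delicate point, and the one place where ``rooted'' is genuinely used: one must verify that neither the admissibility of $\pi$ nor, through Lemma~\ref{map}, the isotonicity of the modified maps is destroyed in passing from $P_0$ back to $P$, and this is exactly what maximality of $p^*$ (no new covers, at most one lower neighbour) guarantees.
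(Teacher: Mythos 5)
Your proof is correct. The paper argues differently in form though not in substance: instead of inducting on $|P|$ by deleting a maximal element $p^*$ and lifting a sequence of moves from $P\setminus\{p^*\}$, it fixes the binomial $f=\prod_i t_{\varphi_i}-\prod_i t_{\psi_i}$, takes the largest poset ideal $\mathcal{I}$ on which the two families of maps agree (after matching them at the root $p_0$), adjoins one new element $p'$ with $\mathcal{I}\cup\{p'\}$ still an ideal, extracts from (\ref{mult}) at $p'$ a nontrivial permutation $\pi$, and splits $f=g+h$ with $g$ special of type $(p',\pi)$ and $h$ agreeing on the larger ideal; the induction is on $|\mathcal{I}|$. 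Unrolled, both arguments perform essentially the same sequence of moves --- one special move per element of $P$, processed along a linear extension from the root upward, with admissibility of $\pi$ coming from rootedness (the unique lower neighbour already lies in the region of agreement) and isotonicity of the target maps, and with Lemma~\ref{map} supplying isotonicity of the modified maps. What the paper's organization buys is that it never has to verify that moves survive passage from a subposet back to $P$: everything happens in $P$ from the start. Your version pays for the cleaner induction variable with the extra lifting verification, which you do carry out correctly --- maximality of $p^*$ guarantees that lower neighbours, and hence the admissibility condition and the hypotheses of Lemma~\ref{map}, are unchanged. One cosmetic point: the $P_0$-induction delivers the target tuple only up to reordering, so you should reindex the $\psi_k$ before asserting $\widetilde\varphi_k|_{P_0}=\psi_k|_{P_0}$ for each $k$; this costs nothing since reindexing does not change the monomial $\prod_k t_{\psi_k}$.
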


\begin{proof}
Let $P$ be a rooted poset and $p_0$ be the minimal element of $P$. 
Also let

\[
f=t_{\varphi_1}t_{\varphi_2}\cdots t_{\varphi_d} - t_{\psi_1}t_{\psi_2}\cdots t_{\psi_d}
\]
 be a nonzero binomial in $J_{P,Q}$. After relabelling of $\psi_i$, we may assume that $\phi_i (p_0) = \psi_i(p_0)$ for all $i$. Let $\mathcal{I} \subset P$ be the largest poset ideal with the property that $\phi_i (p) = \psi_i(p)$ for all $p \in \mathcal{I}$. Since $f \neq 0$, it implies that $\mathcal{I} \neq P$.

Let $p' \in P \setminus \mathcal{I}$ such that  $\mathcal{I'}= \mathcal{I} \cup \{p'\}$ is again the poset ideal in $P$. Note that if $p'$ has a lower neighbor then it belongs to $\mathcal{I}$.



 We will show that $f=g+h$ where $g$ is a special binomial and

 \begin{equation}
 \label{h}
 h= \prod_{i=1}^d t_{\alpha_i} -  \prod_{i=1}^d  t_{\beta_i} \in J_{P,Q} \text { such that  $\alpha_i (p) = \beta_i(p)$ for all $p \in \mathcal{I'}$}.
 \end{equation}

 Since $|\mathcal{I'}| > | \mathcal{I}|$, the desired conclusion follows by induction.


Since
 \begin{equation*}
\{\varphi_1(p'), \varphi_2(p'), \ldots, \varphi_d(p') \} = \{\psi_1(p'), \psi_2(p'), \ldots, \psi_d(p') \}
\end{equation*}
there exists a permutation $\pi$ such that $\psi_i(p') = \phi_{\pi(i)} (p')$. Since we choose $\mathcal{I}$ to be maximal, it follows that  $\psi_i (p') \neq \phi_i(p')$ for some $i$. Hence, we see that $\pi$ is nontrivial. Furthermore, if $p'$ has a lower neighbor $p''$, then $p'' \in \mathcal{I}$.  Therefore,
 \[
\phi_{\pi(i)} (p') = \psi_i (p') \geq \psi_i (p'') = \phi_i (p'').
 \]

Hence,  $g= \prod_{i=1}^d t_{\phi_i}-  \prod_{i=1}^d t_{\phi'_i} \in J_{P,Q}$ is a special where,

\begin{equation}\label{phi}
\phi'_i(p)= \left\{ \begin{array}{ll}
         \phi_{\pi(i)} (p), &  \text{if  $p \geq p'$}, \\
         \phi_i (p), &\text{otherwise}.
                  \end{array} \right.
\end{equation}

Let $h= \prod_{i=1}^d t_{\phi'_i}-  \prod_{i=1}^d t_{\psi_i}$, then $f=g+h$. Since $f,g \in J_{P,Q}$, it implies that $h \in J_{P,Q}$.  Moreover, $h$ satisfies (\ref{h}). This proves the theorem.
\end{proof}

\section{Squarefree binomials}

In this section we will prove that $J_{P,Q}$ is generated by squarefree binomials if each connected component of $P$ is rooted or co-rooted poset. In general, without any restriction on $P$ and $Q$, we have

\begin{Remark}
\label{quad}
Let $P$ and $Q$ be two posets. Then all quadratic binomials in $J_{P,Q}$ are squarefree. Indeed, assume that  $f=t_{\varphi_1} ^2- t_{\psi_1}t_{\psi_2}$.  Then $ \{\phi_1(p), \phi_1(p) \} = \psi_1(p), \psi_2(p) \} $, for all $p \in P$. This implies that $\psi_1 = \psi_2= \phi_1$ and hence $f=0$. This implies that any non-trivial quadratic binomial in $J_{P,Q}$ is squarefree.
\end{Remark}

We first prove the following

\begin{Lemma}\label{cyclic}
Any special binomial of type $(p, \pi)$ can be written as a linear combination of special binomials of type $(p, \pi')$ where the $\pi'$ are cyclic permutations.
\end{Lemma}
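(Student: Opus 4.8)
The plan is to show that any special binomial $f$ of type $(p,\pi)$, with $\pi$ an arbitrary permutation of $\{1,\dots,d\}$, can be rewritten using only special binomials whose permutations are cycles, by decomposing $\pi$ into its disjoint-cycle structure and introducing suitable telescoping intermediate terms. Write $\pi = \gamma_1\gamma_2\cdots\gamma_m$ as a product of disjoint cycles (including trivial fixed points, which contribute nothing). The key point to exploit is that the special binomial $f = \prod_i t_{\phi_i} - \prod_i t_{\phi_i'}$ depends on $\pi$ only through how it permutes the values $\phi_i(q)$ for $q \geq p$; since disjoint cycles act on disjoint blocks of indices, the modification encoded by $\pi$ "factors" through the modifications encoded by the individual $\gamma_j$.

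**The telescoping step.**

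First I would set $\pi_0 = \mathrm{id}$ and $\pi_j = \gamma_1\cdots\gamma_j$ for $j=1,\dots,m$, so $\pi_m = \pi$. For each $j$ define the maps $\phi_i^{(j)}$ exactly as in~(\ref{phi}) but with $\pi_j$ in place of $\pi$: that is, $\phi_i^{(j)}(q) = \phi_{\pi_j(i)}(q)$ for $q \geq p$ and $\phi_i^{(j)}(q) = \phi_i(q)$ otherwise. One must check these are isotone: this follows from Lemma~\ref{map} applied with the original $\phi_i$, once we verify the lower-neighbor condition, namely that if $p$ has a lower neighbor $p'$ then $\phi_{\pi_j(i)}(p) \geq \phi_i(p')$ for all $i$. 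This holds because it holds for the full permutation $\pi$ by hypothesis on $f$, and because the values $\{\phi_1(p),\dots,\phi_d(p)\}$ restricted to any single cycle-block of $\pi$ are simply permuted among themselves; more carefully, for $i$ in the support of $\gamma_{j'}$ with $j' \le j$ we have $\pi_j(i) = \pi(i)$, while for $i$ outside the support of $\gamma_1,\dots,\gamma_j$ we have $\pi_j(i) = i$ and $\phi_i(p) \ge \phi_i(p')$ by isotonicity of $\phi_i$. Granting this, $g_j := \prod_i t_{\phi_i^{(j-1)}} - \prod_i t_{\phi_i^{(j)}}$ is a binomial in $J_{P,Q}$ — indeed it satisfies~(\ref{mult}) since passing from $\pi_{j-1}$ to $\pi_j$ only permutes values within a block — and its permutation is exactly the single cycle $\gamma_j$ (acting on the appropriate index block, with all other indices fixed), so $g_j$ is special of type $(p,\gamma_j)$ with respect to $\phi_1^{(j-1)},\dots,\phi_d^{(j-1)}$. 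The telescoping identity
\[
f = \sum_{j=1}^m g_j
\]
then finishes the argument.

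**Anticipated obstacle.**

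The main obstacle, and the point needing the most care, is the bookkeeping around the lower-neighbor condition~(iii) of Lemma~\ref{map}: the intermediate maps $\phi_i^{(j)}$ use the \emph{partially} permuted indices, so one has to confirm that at each stage the relevant inequality $\phi_i^{(j)}(p) \geq \psi(q)$ at a lower neighbor $q$ of $p$ still holds. As sketched above this is automatic because the blocks are disjoint and each $\phi_i$ is itself isotone, but stating it cleanly requires being explicit that a cycle $\gamma_j$ only permutes values among indices in its own support. A secondary, purely cosmetic point is that a "special binomial of type $(p,\gamma_j)$" in the statement is taken with respect to \emph{some} family of isotone maps, not necessarily the original $\phi_i$; since the definition of special binomial allows an arbitrary reference family $\phi_1,\dots,\phi_d \in \Hom(P,Q)$, the maps $\phi_i^{(j-1)}$ serve perfectly well as that family, so no genuine difficulty arises there.
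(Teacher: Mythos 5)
Your proof is correct and is essentially the paper's argument: both decompose $\pi$ into its disjoint cycles and telescope along the partial products $\pi_j=\gamma_1\cdots\gamma_j$. Indeed, your term $g_j=\prod_i t_{\phi_i^{(j-1)}}-\prod_i t_{\phi_i^{(j)}}$ is exactly the monomial multiple $\bigl(\prod_{j'<j}f_{j'}^{-}\bigr)\bigl(\prod_{j'>j}f_{j'}^{+}\bigr)f_j$ of the block binomial $f_j$ in the paper's factorization $f=\prod_j f_j^{+}-\prod_j f_j^{-}$, so your additive telescoping is the expanded form of the paper's inductive identity; you are simply more explicit about verifying the lower-neighbor condition of Lemma~\ref{map}, which the paper leaves implicit.
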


\begin{proof}
Let $\pi = \pi_1 \pi_2 \ldots \pi_l$ be the unique cycle decomposition of $\pi$.Then $f= \prod_{i=1}^l f_i^+ - \prod_{i=1}^l f_i^- $ where each $f_i = f_i^+ - f_i^-$ is a special binomial of type $(p, \pi_i)$. From this presentation of $f$, it can be shown by induction on $l$ that $f$  can be written as a linear combination of the $f_i$.
\end{proof}

Now we prove the main theorem.

\begin{Theorem}
\label{sqfr}
Let $P$ be a poset whose connected components are rooted or co-rooted posets. Then for any $Q$, the ideal $J_{P,Q}$ is generated by squarefree special binomials.
\end{Theorem}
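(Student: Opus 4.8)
The plan is to combine the structural results already in place. By Theorem~\ref{connected} it is enough to treat the case that $P$ is connected, hence rooted or co-rooted; assume $P$ is rooted, the co-rooted case being entirely analogous. By Theorem~\ref{special}, $J_{P,Q}$ is generated by special binomials, and by Lemma~\ref{cyclic} it is even generated by special binomials of type $(p,\pi)$ with $\pi$ a cyclic permutation. Let $I\subseteq T$ be the ideal generated by all squarefree special binomials; since these lie in $J_{P,Q}$, it suffices to show $f\in I$ for every special binomial $f$ of type $(p,\pi)$ with $\pi$ a $k$-cycle.

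I would prove this by strong induction on $k$. The indices fixed by $\pi$ contribute a common monomial factor to both terms of $f$, so, dividing it out (legitimate since $I$ is an ideal) and relabelling the maps, we may assume $f=\prod_{i=1}^k t_{\varphi_i}-\prod_{i=1}^k t_{\varphi'_i}$ with $\pi=(1\,2\,\cdots\,k)$ and $\varphi'_i$ agreeing with $\varphi_i$ on $\{q\in P: q\ngeq p\}$ and with $\varphi_{\pi(i)}$ on $\{q: q\geq p\}$. For $k\leq 2$ the binomial $f$ is quadratic, hence zero or squarefree by Remark~\ref{quad}, so $f\in I$. For $k\geq 3$: if $f$ is squarefree we are done, so suppose not. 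Then $\prod t_{\varphi_i}$ or $\prod t_{\varphi'_i}$ has a repeated variable; as $\varphi'_i(p)=\varphi_{\pi(i)}(p)$, in either case there are indices $a\neq b$ with $\varphi_a(p)=\varphi_b(p)$.

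The heart of the argument is then to split $f$ using this equality. Set $\tau=(a\,b)$ and $\sigma=\tau\pi$, so $\pi=\tau\sigma$; since $\tau$ transposes two elements of the $k$-cycle $\pi$, the permutation $\sigma$ decomposes into two disjoint cycles $\sigma_1,\sigma_2$ on $\{1,\dots,k\}$ (one possibly trivial), each of length at most $k-1$. Define $\psi_i$ to agree with $\varphi_i$ on $\{q:q\ngeq p\}$ and with $\varphi_{\sigma(i)}$ on $\{q:q\geq p\}$. One checks that $\sigma(i)=\pi(i)$ unless $\pi(i)\in\{a,b\}$, and in the exceptional cases $\varphi_{\sigma(i)}(p)=\varphi_{\tau(\pi(i))}(p)=\varphi_{\pi(i)}(p)$ by the choice of $a,b$; consequently $\varphi_{\sigma(i)}(p)\geq\varphi_i(p')$ whenever $p$ has a lower neighbour $p'$, so each $\psi_i$ is isotone by Lemma~\ref{map} and $g_1:=\prod t_{\varphi_i}-\prod t_{\psi_i}$ is a special binomial of type $(p,\sigma)$. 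By Lemma~\ref{cyclic}, $g_1$ is a combination of special binomials of types $(p,\sigma_1)$ and $(p,\sigma_2)$, which, $\sigma_1$ and $\sigma_2$ being cycles of length $<k$, lie in $I$ by the induction hypothesis; hence $g_1\in I$. Finally $g_2:=f-g_1=\prod t_{\psi_i}-\prod t_{\varphi'_i}$: with $\mu:=\sigma^{-1}\tau\sigma$ (a $2$-cycle, because $\tau\sigma=\pi$) one sees that $\varphi'_i$ is exactly the map obtained from $\psi_i$ by transporting upper parts along $\mu$, and the inequalities $\psi_{\mu(i)}(p)=\varphi_{\pi(i)}(p)\geq\varphi_i(p')=\psi_i(p')$ are precisely those carried by $\pi$; thus $g_2$ is a special binomial of type $(p,\mu)$ with respect to $\psi_1,\dots,\psi_k$, and $g_2\in I$ by the case $k=2$. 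Therefore $f=g_1+g_2\in I$, completing the induction and hence the proof.

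The step I expect to be the real obstacle is this splitting: one must choose the factorization $\pi=\tau\sigma_1\sigma_2$ and the auxiliary maps $\psi_i$ so that \emph{both} $g_1$ and $g_2$ come out as genuine special binomials, and every isotonicity and lower-neighbour check there has to be funnelled through the single equality $\varphi_a(p)=\varphi_b(p)$ provided by the failure of squarefreeness. Some care is also needed with the cyclic boundary cases $a=1$ and $b=a+1$, where one of $\sigma_1,\sigma_2$ collapses to the identity; these are harmless but should be checked.
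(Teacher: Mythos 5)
Your proof is correct, and its scaffolding (reduction to connected rooted $P$ via Theorem~\ref{connected}, reduction to special binomials of cyclic type via Theorem~\ref{special} and Lemma~\ref{cyclic}, the quadratic base case via Remark~\ref{quad}) is exactly the paper's. Where you genuinely diverge is the inductive splitting step. The paper uses the repeated variable to get an equality of \emph{maps} $\phi_i=\phi_j$, excises the segment $g=t_{\phi_i}\cdots t_{\phi_{j-1}}-t_{\psi_i}\cdots t_{\psi_{j-1}}$, shows it is special of the shorter cyclic type $(i\ i{+}1\ \cdots\ j{-}1)$ (the equality $\phi_j=\phi_i$ closes the cycle), and splits multiplicatively, $f=\tfrac{f^+}{g^+}\,g+g^-\,h$ with $h=\tfrac{f^+}{g^+}-\tfrac{f^-}{g^-}$, inducting on degree. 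You instead factor the permutation, $\pi=\tau\sigma$ with $\tau=(a\,b)$, and split additively, $f=g_1+g_2$, where $g_1$ is special of type $(p,\sigma)$ with $\sigma$ a product of two shorter disjoint cycles (then Lemma~\ref{cyclic} and induction on cycle length) and $g_2$ is special of transposition type with respect to the intermediate family $\psi_i$, hence a monomial times a quadratic special binomial. The two routes produce the same pair of shorter cycles, so the combinatorial content is the same, but yours buys two things: it only needs the pointwise equality $\varphi_a(p)=\varphi_b(p)$ rather than equality of maps, and every piece of your decomposition is explicitly a special binomial, whereas the paper's $h$ is only placed in $J_{P,Q}$ by primeness and must be re-processed through Theorem~\ref{special}. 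The cost is the bookkeeping with $\sigma$ and $\mu=\sigma^{-1}\tau\sigma$ and the auxiliary maps $\psi_i$; I checked the isotonicity of the $\psi_i$ via Lemma~\ref{map}, the lower-neighbour inequalities (all of which do funnel through $\varphi_a(p)=\varphi_b(p)$, as you anticipated), and the identity $\sigma\mu=\pi$ making $g_2$ special of type $(p,\mu)$, and they all hold.
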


\begin{proof}
By using Theorem~\ref{connected}, we may assume that $P$ is connected. Furthermore, we may also assume that $P$ is rooted. We will show by induction on degree of $f$ that $f$ is a linear combination of squarefree binomials in $J_{P,Q}$.

Let $f=t_{\varphi_1}t_{\varphi_2}\cdots t_{\varphi_d} - t_{\psi_1}t_{\psi_2}\cdots t_{\psi_d} \in J_{P,Q}$ and $f \neq 0$. By Theorem~\ref{special} and the Lemma~\ref{cyclic}, we may assume that $f$ is a special binomial  of type $(p, \pi)$ where $\pi$ is the cyclic permutation given by $(1\; 2 \; \ldots \; d)$. By Remark~\ref{quad}, we may assume that $d>2$. If all $\phi_i$ as well as all $\psi_i$ are pairwise distinct then $f$ is squarefree and there is nothing to prove. Assume that $\phi_i= \phi_j $. 

 Let $g= g^+ - g^-$ where $g^+=  t_{\phi_i}  t_{\phi_{i+1}} \ldots t_{\phi_{j-1}}$ and $g^-= t_{\psi_i}  t_{\psi_{i+1}} \ldots t_{\psi_{j-1}}$. Then $g \in J_{P,Q}$ because it is special of type $(p, \pi')$ with respect to $\phi_i, \phi_{i+1}, \ldots, \phi_{j-1}$ and $\pi' = (i \; i+1 \; \cdots \; j-1)$. Indeed, if $p' \ngeq p$ then $\psi_r(p') = \phi_r (p')$ , and if $p' \geq p$ then

 \begin{equation*}
\psi_r(p')=  \phi_{r+1} (p')  \text{ for  $r=i, \ldots, j-1$ and } \phi_j (p') = \phi_i (p')
\end{equation*}


Hence, we may write
\[
f = \frac{f ^+}{g^+ }\;g  +  g^-\; h
\]
where
\[
h=\frac{f ^+}{g^+ } - \frac{f ^-}{g^- }
\]
Since $\deg g, \deg h < \deg f$, by induction we know that $g$ and $h$ can be written as a linear combination of squarefree binomials. This completes the proof.
\end{proof}

\section{Quadratic binomials}

Here we will prove that $J_{P,Q}$ is generated by quadratic squarefree binomials if each connected components of $P$ is rooted or co-rooted and $Q$ does not contain any proper poset cycle of length $\geq 6$.

Let $Q$ be a poset.  A sequence $q_1, q_2, \ldots, q_{2m}$ of elements in $Q$ is called a {\em poset cycle} of length $2m$ if $q_1 \leq  q_2 \geq q_3 \leq \ldots \leq q_{2m}\geq q_1$.

We say that a  poset cycle of length $\geq 6$ has a chord if  there exist odd  $i$ and even $j$ with $j\neq i-1,i+1$ such that $q_i \leq q_j$. (Here $0$ is identified with $2m$).

A poset cycle is called {\em proper} if all elements $q_i$ in the cycle are pairwise distinct and the element with even indexes are pairwise incomparable as well as elements with odd indexes.

\begin{figure}[htbp]
\includegraphics[width = 5cm]{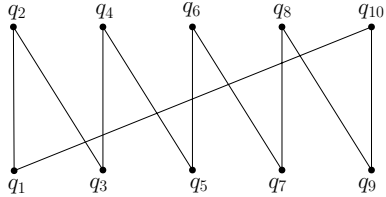}
\caption{A proper poset cycle of length 10}\label{something2}
\end{figure}

\begin{Lemma}\label{chord}
Let the sequence  $\Cc: q_1, q_2, \ldots, q_{2m}$ of length $\geq 6$ be a poset cycle in $Q$. If $\Cc$ is not a proper poset cycle then it admits a chord.
\end{Lemma}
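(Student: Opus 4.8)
The plan is to prove the contrapositive: \emph{if $\Cc$ has no chord, then $\Cc$ is proper}. Throughout, index arithmetic is modulo $2m$ (with $q_0=q_{2m}$), and I will use freely the cycle inequalities $q_i\le q_{i-1}$ and $q_i\le q_{i+1}$ for odd $i$ (equivalently $q_j\ge q_{j\pm1}$ for even $j$). The hypothesis ``no chord'' translates into the following: whenever $i$ is odd, $j$ is even, and $q_i\le q_j$, then $j\in\{i-1,i+1\}$. The only numerical input is that $2m\ge 6$, so $2m\nmid 2$ and $2m\nmid 4$; this is precisely what fails for $m\le 2$.

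First I would show that the even-indexed elements are pairwise incomparable. If $q_j$ and $q_{j'}$ were comparable with $j\ne j'$ even, say $q_j\le q_{j'}$, then $q_{j-1}\le q_j\le q_{j'}$ together with ``no chord'' gives $j'\in\{j-2,j\}$, hence $j'=j-2$; likewise $q_{j+1}\le q_j\le q_{j'}$ gives $j'=j+2$, so $j-2\equiv j+2\pmod{2m}$, i.e. $2m\mid 4$, a contradiction. The mirror argument, applied to a comparable pair $q_i\le q_{i'}$ with $i\ne i'$ odd via the chains $q_i\le q_{i'}\le q_{i'\pm1}$, shows that the odd-indexed elements are pairwise incomparable as well.

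Next I would show that all $q_i$ are distinct. By the previous step, elements whose indices have the same parity are unequal, since equal elements are in particular comparable. So suppose $q_i=q_j$ with $i$ odd, $j$ even, $i\ne j$. Then $q_i\le q_j$, so ``no chord'' forces $j\in\{i-1,i+1\}$. If $j=i+1$, then $q_{i+2}\le q_{i+1}=q_i\le q_{i-1}$ yields $q_{i+2}\le q_{i-1}$ with $i+2$ odd and $i-1$ even, whence ``no chord'' gives $i-1\in\{i+1,i+3\}$, i.e. $2m\mid2$ or $2m\mid4$, impossible; the case $j=i-1$ is symmetric, propagating instead to $q_{i-2}\le q_{i+1}$. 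Hence $\Cc$ is proper, which is the contrapositive of the lemma; equivalently, read forwards, in each of the excluded cases the chains displayed above exhibit an explicit chord.

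The routine part is the index bookkeeping modulo $2m$. The one step that needs a second thought is the mixed-equality case $q_i=q_j$ with $j=i\pm1$: here $q_i=q_j$ is not itself a chord, and one must push the comparison one further step along the cycle (to $q_{i+2}$ or $q_{i-2}$) before ``no chord'' can be applied. Verifying that the bound $m\ge 3$ leaves enough room for all the congruences above to be non-degenerate is the main, and only minor, obstacle.
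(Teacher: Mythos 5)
Your proof is correct and is essentially the paper's own argument read contrapositively: both reduce a mixed-parity equality $q_i=q_j$ to a same-parity comparability and then compose with a cycle edge to produce a chord. If anything, your version is slightly more careful, since by deriving both $j'=j-2$ and $j'=j+2$ (and then $2m\mid 4$) you explicitly dispose of the adjacent-index cases that the paper's phrase ``the relation $q_{i-1}\leq q_j$ gives us a chord'' quietly skips when $j=i-2$.
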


\begin{proof}
If $\Cc$ is not a proper poset cycle then we have one of the following cases:

 \begin{enumerate}
 \item[(i)]$q_i \leq q_j$ where both $i$ and $j$ are even or odd.
 \item[(ii)] $q_i = q_j$ for some odd $i$ and even $j$.
 \end{enumerate}

In case (i), if $i$ and $j$ are even then we have $q_{i-1} \leq q_i \leq q_j$. Then the relation $q_{i-1}\leq q_j$ gives us a chord. Similarly, if $i$ and $j$ are odd then $ q_i \leq q_j \leq q_{j+1}$ and $q_i \leq q_{j+1}$ gives us the chord.

Now we discuss the case (ii) and assume first that $i<j$. Then $q_{j-1} \leq q_j = q_i$ where $j-1$ and $i$ are both odd. If $j-1\neq i$, then we are in case (i). Assume now that $j-1=i$. Then $q_{j+1}\leq q_{j-1}$, and we are again in case (i).

Similar arguments holds when $i>j$.
\end{proof}

Now we are ready to prove the main Theorem of this section.

\begin{Theorem}
Let $P$ and $Q$ be two posets.
\begin{itemize}
\item[{\em (a)}] If $P$ is an antichain then $J_{P,Q}$ is generated by quadratic squarefree binomials.
\item[{\em (b)}] If $P$ is not an antichain and  each connected component of $P$ is rooted or co-rooted, then the following conditions are equivalent:
\item[{\em (i)}] $J_{P,Q}$ is generated by quadratic  binomials.
\item[{\em (ii)}] $J_{P,Q}$ is generated by quadratic squarefree binomials.
\item[{\em (iii)}]  Each proper poset cycle of $Q$ of length $\geq 6$ has a chord.
\end{itemize}
\end{Theorem}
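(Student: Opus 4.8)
The plan is to reduce the statement to the equivalence (ii)$\Leftrightarrow$(iii), since the rest is formal. For \textbf{(a)}, if $P$ is an antichain then each connected component $P_i$ of $P$ is a single point, $K[P_i,Q]$ is a polynomial ring and $I_{P_i,Q}=0$; Theorem~\ref{connected} then yields that $J_{P,Q}$ is generated by quadratic binomials, and by Remark~\ref{quad} these are automatically squarefree. For \textbf{(b)}, the implication (ii)$\Rightarrow$(i) is trivial and (i)$\Rightarrow$(ii) is immediate from Remark~\ref{quad}, so it remains to prove (ii)$\Leftrightarrow$(iii); here both hypotheses on $P$ are used, the first for (iii)$\Rightarrow$(ii) and the second for its converse.

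\textbf{(iii)$\Rightarrow$(ii).} By Theorem~\ref{connected} we may assume $P$ connected, and then (the co-rooted case being symmetric) rooted, with unique minimal element $p_0$. By Theorem~\ref{special} and Lemma~\ref{cyclic}, $J_{P,Q}$ is generated by special cyclic binomials, so it suffices to prove that every such binomial $f$, of type $(p,\pi)$ with $\pi=(1\,2\,\ldots\,d)$, lies in the ideal generated by the quadratic binomials of $J_{P,Q}$; we induct on $d$, the cases $d\le 2$ being clear. Let $d\ge 3$ and $f=\prod_{i=1}^d t_{\varphi_i}-\prod_{i=1}^d t_{\varphi_i'}\ne 0$. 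Then $p\ne p_0$, so $p$ has a (unique) lower neighbour $p'$, and setting $a_i=\varphi_i(p)$, $b_i=\varphi_i(p')$, isotonicity of the $\varphi_i$ and $\varphi_i'$ forces $a_i\ge b_i$ and $a_{i+1}\ge b_i$ (indices mod $d$), so that
\[
b_1,\ a_2,\ b_2,\ a_3,\ \ldots,\ b_d,\ a_1
\]
is a poset cycle in $Q$ of length $2d\ge 6$. By Lemma~\ref{chord} (if it is not proper) or by hypothesis (iii) (if it is), this cycle has a chord; reading off the definition of a chord for this cycle, it gives indices with $\varphi_k(p')\le\varphi_l(p)$ and $l\ne k,k+1$ mod $d$, and after a cyclic relabelling of $\varphi_1,\ldots,\varphi_d$ we may take $k=d$, $l\in\{2,\ldots,d-1\}$, i.e.\ $a_l\ge b_d$. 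Now write $\pi=\sigma\tau$ with $\sigma=(1\,2\,\ldots\,l)$ and $\tau=(l\,l{+}1\,\ldots\,d)$, cyclic permutations overlapping in $l$. Let $\chi_d$ be the map equal to $\varphi_l$ on $\{q\ge p\}$ and to $\varphi_d$ elsewhere; it is isotone by Lemma~\ref{map} (using $a_l\ge b_d$), and one checks that $g=\prod_{i=1}^d t_{\varphi_i}-\big(\prod_{i<l}t_{\varphi_i}\big)\big(\prod_{i=l}^{d-1}t_{\varphi_i'}\big)t_{\chi_d}$ is a monomial times a special cyclic binomial of type $(p,\tau)$ of degree $d-l+1<d$, while $h=f-g$ has the factors $t_{\varphi_l'},\ldots,t_{\varphi_{d-1}'}$ in common on its two monomials, and after cancelling them becomes a monomial times a special cyclic binomial of type $(p,\sigma)$ of degree $l<d$ (the inequalities $a_{j+1}\ge b_j$, $a_l\ge b_{l-1}$ and $a_1\ge b_d$ supplying the hypotheses of Lemma~\ref{map}). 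By the induction hypothesis $g$ and $h$ lie in the ideal generated by quadratic binomials, hence so does $f$.

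\textbf{$\neg$(iii)$\Rightarrow\neg$(i).} Suppose $Q$ has a chordless poset cycle $q_1\le q_2\ge q_3\le\cdots\le q_{2m}\ge q_1$ of length $2m\ge 6$ (which is proper by Lemma~\ref{chord}), and pick a covering relation $p'\lessdot p$ in $P$. Define $\varphi_i\colon P\to Q$ ($1\le i\le m$) by $\varphi_i(x)=q_{2i-2}$ if $x\ge p$ and $\varphi_i(x)=q_{2i-1}$ otherwise (indices mod $2m$); since $q_{2i-1}\le q_{2i-2}$, each $\varphi_i$ is isotone, and the special cyclic binomial $f$ of type $(p,(1\,2\,\ldots\,m))$ built from $\varphi_1,\ldots,\varphi_m$ is a nonzero element of $J_{P,Q}$ (the $q_j$ being distinct). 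Restriction of isotone maps to the chain $\{p'<p\}$ induces a $K$-algebra homomorphism $T\to T'=K[t_\eta\colon\eta\in\Hom(\{p'<p\},Q)]$ carrying $J_{P,Q}$ into $J_{\{p'<p\},Q}$, hence the ideal generated by the quadratic binomials of $J_{P,Q}$ into that of $J_{\{p'<p\},Q}$; since $f$ maps to a nonzero binomial, it suffices to handle $P=\{p'<p\}$. There $f^+=\prod_i t_{(q_{2i-1},q_{2i-2})}$ and $f^-=\prod_i t_{(q_{2i-1},q_{2i})}$, the monomials of the same multidegree are exactly the products $\prod_i t_{(b_i,a_i)}$ with $\{b_i\}$ the set of odd $q_j$ and $\{a_i\}$ the set of even $q_j$, and a quadratic binomial of $J_{\{p'<p\},Q}$ changes such a monomial by a cross-swap $(b_1,a_1),(b_2,a_2)\mapsto(b_1,a_2),(b_2,a_1)$, admissible only when $a_2\ge b_1$ and $a_1\ge b_2$. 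Chordlessness and properness force any comparability between an even $q_j$ and an odd $q_j$ to be a cycle edge, and a short computation then shows that for $m\ge 3$ no cross-swap applies to $f^+$; hence $f^+$ and $f^-$ lie in distinct connected components of the fibre graph of the quadratic binomials, so by the standard fact that membership of a binomial in a binomial ideal is equivalent to connectivity of its two monomials in the fibre graph of a generating set, $f$ is not in the ideal generated by quadratic binomials.

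\textbf{Expected main obstacle.} The heart of the argument is the inductive step of (iii)$\Rightarrow$(ii): one must recognise that the inequalities defining a degree-$d$ special cyclic binomial encode a poset cycle of length $2d$ in $Q$, and that a chord of it is precisely the wrap-around inequality permitting the split $\pi=\sigma\tau$ into two overlapping shorter cycles. The delicate bookkeeping lies in verifying that the two binomials produced by the split are again special, so that Lemma~\ref{map} applies and the induction closes, together with the use of Lemma~\ref{chord} to reduce hypothesis (iii) to proper cycles. In the converse direction the point requiring care is the reduction to the two-element poset and the invocation of the fibre-connectivity criterion for non-membership.
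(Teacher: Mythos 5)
Your proposal is correct and follows essentially the same route as the paper: reduce to special cyclic binomials via Theorem~\ref{connected}, Theorem~\ref{special} and Lemma~\ref{cyclic}, encode a degree-$d$ special binomial as a poset cycle of length $2d$ in $Q$, use a chord (supplied by Lemma~\ref{chord} or by hypothesis (iii)) to split it into two shorter special binomials and induct, and for the converse exhibit the same witness binomial built from a chordless proper cycle. The only cosmetic difference is in the converse, where you pass to the two-element subposet $\{p'<p\}$ and invoke fibre-graph connectivity, while the paper argues directly that any quadratic $g\in J_{P,Q}$ with $g^+\mid f^+$ would force a chord --- the same obstruction in different packaging.
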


\begin{proof}
By Theorem~\ref{connected}, we may assume that $P$ is connected. Then in case $(a)$, the poset $P$ is just a single element and $K[P,Q]$ is a polynomial ring.

(b)  Since $P$ is connected, we may assume that $P$ is rooted.

(i) \iff (ii): The proof follows from Remark~\ref{quad}.

(i) \implies (iii): Let $\mathcal{C}: q_1, q_2, \ldots, q_{2m}$ be a proper poset cycle in $Q$  of length $2m$ with $m\geq 3$. Let $p_1$ be the minimal element of the rooted poset $P$. We define the following maps. For $i=1, \ldots, m$,

 \begin{equation*}
\phi_i(p)= \left\{ \begin{array}{ll}
         q_{2i-1}, &  \text{$p=p_1$}, \\
        q_{2i}, & \text{otherwise}.
                  \end{array} \right.
\end{equation*}
and

\begin{equation*}
\psi_1(p)= \left\{ \begin{array}{ll}
         q_{1}, &  \text{$p=p_1$}, \\
        q_{2m}, & \text{otherwise}.
                  \end{array} \right.
\end{equation*}
and for $i=2, \ldots, m-1$

 \begin{equation*}
\psi_i(p)= \left\{ \begin{array}{ll}
         q_{2i-1}, &  \text{$p=p_1$}, \\
        q_{2i-2}, & \text{otherwise,}
                  \end{array} \right.
\end{equation*}

Then $f=t_{\phi_1}\ldots t_{\phi_m} - t_{\psi_1}\ldots t_{\psi_m} \in J_{P,Q}$ is a special binomial of degree $m$ and type $(p, \pi)$ for some $p \in P$ with $p \neq p_1$. After relabeling of indexes, we may assume that $\pi = (1 \; m \; m-1 \; \ldots \; 2)$ and . Then, since by our assumption $J_{P,Q}$ is generated by quadratic binomials, there exists a nonzero quadratic binomial $g=g^+ - g^- \in J_{P,Q}$ such that $g^+ | f^+$. We may assume that $g = t_{\phi_i} t_{\phi_j} - t_{\gamma_1} t_{\gamma_2}$  for some $i< j $ and $\phi_i(p_1) = \gamma_1 (p_1)$ and $\phi_j (p_1) = \gamma_2 (p_1)$.  Moreover, we have

\[
\{ \phi_i(p) , \phi_j(p)\}= \{\gamma_1(p) , \gamma_2(p)\} \text{ for all $p \in P$}
\]

Since $g \neq 0$ there exists $p\in P$ and $p \neq p_1$ such that $\phi_i(p) = \gamma_2(p)$ and $\phi_j(p) = \gamma_1(p)$.  Therefore, $\phi_i(p) = q_{2i} \geq \phi_j(p_1) = q_{2j-1}$ and $\phi_{j} (p)= q_{2j} \geq \phi_i(p) = q_{2i-1}$. The sequence $\mathcal{C'} = q_{2i-1}, q_{2i}, q_{2j-1}, q_{2j}$ is a 4-cycle in $Q$. In particular $\mathcal{C}$ has a chord.




(iii) \implies (i): Let $f \in J_{P,Q}$  and $f \neq 0$. Then by Theorem~\ref{special}, we may assume that $f=t_{\phi_1}\ldots t_{\phi_m} - t_{\phi'_1}\ldots t_{\phi'_m} \in J_{P,Q}$ is  a special  binomial of type $(p,\pi)$ where $\pi$ is a cyclic permutation. After a relabeling of indxes of $\phi_i$ and $\phi'_i$, we may assume that $(1 \; m  \; m-1 \; \cdots \; 2)$. If $m=2$, then the assertion is trivial. Let $m \geq 3$. Since $f \neq 0$ there exists a lower neighbor of $p'$ of $p$ which is unique because $P$ is rooted. Then the sequence  $\mathcal{C}: q_1, q_2, \ldots, q_{2m}$ with  $ q_{2i-1} = \phi_i(p')$ and $q_{2i} = \phi_i(p)$ is a poset cycle in $Q$. 

By Lemma~\ref{chord} and (iii), we know that $\Cc$ admits a chord. We may assume that the chord is given by the relation $q_1 \leq q_{2i}$ with $i \neq m$.

Let $g= t_{\psi_1} \prod_{j=i+1}^m t_{\phi_j}- t_{\phi'_1}  \prod_{j=i+1}^m t_{\phi'_j}$ where

 \begin{equation*}
\psi_1(p'')= \left\{ \begin{array}{ll}
         \phi_{i} (p''), &  \text{$p'' \geq p$}, \\
        \phi_1(p''), & \text{otherwise}.
                  \end{array} \right.
\end{equation*}

By using Lemma~\ref{map}, it can be seen that $\psi_1$ is isotone. Observe that $g \in J_{P,Q}$. To see this, we have to show that for all $p'' \in P$

\begin{equation*}
\{\psi_1(p''),   \varphi_{i+1}(p''), \ldots, \varphi_m(p'') \} = \{\phi'_1(p''),  \varphi'_{i+1}(p''), \ldots, \phi'_m(p'') \}
\end{equation*}
as multi-set.

If $p''\geq p$, then $\psi_1(p'') = \phi_i(p'')$ and $\phi'_{j}(p'') = \phi_{j-1} (p'')$ for $j=i+1, \ldots, m$ and $\phi'_1(p'') = \phi_{m} (p'')$. In this case, the two multi-set coincides.

If $p'' \ngeq p$, then $\psi_1(p'') = \phi_1(p'')$ and $\phi'_j(p'') = \phi_{j} (p'')$ for $j=1,i+1, \ldots, m$. Again, the two multi-sets coincides.


Then

\[
f  -  t_{\phi'_2} \ldots t_{\phi'_{i}} g = \prod_{j=i+1}^m t_{\phi_j}  h ,
\]

where
\[
h=t_{\phi_1} t_{\phi_2} \ldots t_{\phi_{i}} - t_{\psi_1} t_{\phi'_2} \ldots t_{\phi'_{i}}.
\]

Since $f,g \in J_{P,Q}$ and $J_{P,Q}$ is a prime ideal, it follows that $h \in J_{P,Q}$. Note that $\deg g, \deg h < \deg f$. Then by induction, we know that $g$ and $h$ can be written as linear combination of quadratic binomials, which completes the argument.

\end{proof}

\end{document}